%%%%%%%%%%%%%%%%%%%% author.tex %%%%%%%%%%%%%%%%%%%%%%%%%%%%%%%%%%%
%
% sample root file for your "contribution" to a contributed volume
%
% Use this file as a template for your own input.
%
%%%%%%%%%%%%%%%% Springer %%%%%%%%%%%%%%%%%%%%%%%%%%%%%%%%%%

% RECOMMENDED %%%%%%%%%%%%%%%%%%%%%%%%%%%%%%%%%%%%%%%%%%%%%%%%%%%
%\documentclass[graybox]{svmult}

\documentclass[12pt,a4paper]{article}

% choose options for [] as required from the list
% in the Reference Guide

\usepackage{mathptmx}       % selects Times Roman as basic font
\usepackage{helvet}         % selects Helvetica as sans-serif font
\usepackage{courier}        % selects Courier as typewriter font
\usepackage{type1cm}        % activate if the above 3 fonts are
                            % not available on your system
%
\usepackage{makeidx}         % allows index generation
\usepackage{graphicx}        % standard LaTeX graphics tool
                             % when including figure files
\usepackage{multicol}        % used for the two-column index
\usepackage[bottom]{footmisc}% places footnotes at page bottom

% see the list of further useful packages
% in the Reference Guide

\makeindex             % used for the subject index
                       % please use the style svind.ist with
                       % your makeindex program

                       %%%%%%%%%%%%%%%%%%%%%%%%%%%%%%%%%%%%%%%%%%%%%%%%%%%%%%%%%%%%%%%%%%%%%%%%%%%%%%%%%%%%%%%%%

\parskip0mm
\parindent3ex
\topmargin-1cm
\textheight222mm
\textwidth170mm
\oddsidemargin-5mm
\evensidemargin3mm

\usepackage{enumerate,bbm, amssymb, amsmath}
\newcommand{\bol}[1]{\mbox{\boldmath$#1$}}

\newcommand{\bSigma}{\bol{\Sigma}}
\newcommand{\bS}{\mathbf{S}}

\newcommand{\bI}{\mathbf{I}}

\newcommand{\bX}{\mathbf{X}}
\newcommand{\by}{\mathbf{y}}
\newcommand{\bY}{\mathbf{Y}}
\newcommand{\bB}{\mathbf{B}}
\newcommand{\bA}{\mathbf{A}}

\newcommand{\bzero}{\mathbf{0}}
%%%%%%%%%%%%%%%%%%%%%%%%%%%%%%%%%%%%%%%%%%%%%%%%%%%%%%%%%%%%%%%%%%%%%%%%%%%%%%%%%%%%%%%%%

%\usepackage{bbm}

\newtheorem{theorem}{Theorem}

\newtheorem{corollary}{Corollary}
\newenvironment{proof}{\paragraph{Proof:}}{\hfill$\square$}

\begin{document}

\title{Spectral analysis of large reflexive generalized inverse and Moore-Penrose inverse matrices}
%\titlerunning{Spectral analysis of inverse matrices}
% Use \titlerunning{Short Title} for an abbreviated version of
% your contribution title if the original one is too long
\author{Taras Bodnar\\ Department of Mathematics, Stockholm University \and Nestor Parolya\\ Delft Institute of Applied Mathematics, Delft University of Technology}
% Use \authorrunning{Short Title} for an abbreviated version of
% your contribution title if the original one is too long
%\institute{Taras Bodnar \at Department of Mathematics, Stockholm University, Stockholm, Sweden, \\\email{taras.bodnar@math.su.se}
%\and Nestor Parolya \at Delft Institute of Applied Mathematics, Delft University of Technology, Delft, The Netherlands, \email{n.parolya@tudelft.nl}}
%
% Use the package "url.sty" to avoid
% problems with special characters
% used in your e-mail or web address
%
\maketitle

\abstract{A reflexive generalized inverse and the Moore-Penrose inverse are often confused in statistical literature but in fact they have completely different behaviour in case the population covariance matrix is not a multiple of identity. In this paper, we study the spectral properties of a reflexive generalized inverse and of the Moore-Penrose inverse of the sample covariance matrix.  The obtained results are used to assess the difference in the asymptotic behaviour of their eigenvalues.
  %Furthermore, several applications are provided to the problems of the estimation of the precision matrix and functionals involving precision matrix in high dimensions.
}
\section{Introduction}

Let $\bY_n=(\by_1,\by_2,...,\by_n)$ be the $p \times n$ data matrix which consists of $n$ column vectors of dimension $p$ with $E(\by_i)=\bzero$ and $Cov(\by_i)=\bSigma$ for $i \in 1,...,n$. We assume that $p/n\rightarrow c\in (1, +\infty)$ as $n\rightarrow\infty$. This type of limiting behavior is also referred to a ''large dimensional asymptotics'' or ''the Kolmogorov asymptotics''. In this case, the traditional estimators perform very poorly and tend to over/underestimate the unknown parameters of the asset returns, i.e., the mean vector and the covariance matrix.

Throughout this paper it is assumed that there exists a $p\times n$ random matrix $\bX_n$ which consists of independent and identically distributed (i.i.d.) real random variables with zero mean and unit variance such that
\begin{equation}\label{obs}
 \bY_n=  \bSigma^{\frac{1}{2}}\bX_n \,,
 \end{equation}
where the matrix $\bSigma^{\frac{1}{2}}$ denotes the symmetric square root of the population covariance matrix $\bSigma$. Other square roots of $\bSigma$, like the lower triangular matrix of the Cholesky decomposition, can also be used. Note that the observation matrix $\bY_n$ consists of dependent rows although its columns are independent. It is worth mentioning that although the assumption of time independence looks quite restrictive in real-life applications, the model can be extended to dependent variables (for instance, causal AR(1) models) \footnote{Bai and Zhou ~\cite{BaiZhou2008} define a general dependence structure in the following way: for all $k$, $E(Y_{jk}Y_{lk})=\sigma_{lj}$, and for any non-random matrix $\bB$ with bounded norm, it holds that $E\left|\mathbf{y}_k^\top\bB\mathbf{y}_k-\mbox{tr}(\bB\bSigma)\right|^2=o\left(n^2\right)$ where $\bSigma=(\sigma_{lj})$.} by imposing more complicated conditions on the elements of $\bY_n$ (see, ~\cite{BaiZhou2008} for details) or by assuming an $m$-dependent structure (e.g., ~\cite{HuiPan2010} for a finite number of dependent entries, and ~\cite{Friesen2013} and ~\cite{Wei2016} for a possibly increasing number). Nevertheless, this will not change the main results of our paper and would only make the proofs more technical. That is why we assume independence for the sake of brevity and transparency.

%The main assumption which is used throughout the paper is
%\begin{description}
%\item {\bf (A1)} The noise matrix $\bX_n$ is a $p\times n$ matrix which consists of i.i.d. real random variables with zero mean and  unit variance.
%%\item {\bf (A2)} The covariance matrix $\bSigma$ is positive definite with minimum and minimum eigenvalues uniformly bounded away from zero and infinity, respectively.
%\end{description}

% These two regularity conditions are very general and they fit many practical situations. Assumption (A1) is common for financial and statistical problems. It does not impose a strong restriction on the data-generating process. Assumption (A2) is purely technical. Moreover, it seems to influence only the convergence rate of the proposed estimator (see, e.g., Rubio et al. (2012)).\\

The sample covariance matrix is given by (e.g., ~\cite{KollovonRosen2005})
\begin{equation}\label{samplecov}
 \bS_n=\frac{1}{n}\bY_n\bY_n^{\prime}=\frac{1}{n}\bSigma^{\frac{1}{2}}\bX_n\bX_n^{\prime}\bSigma^{\frac{1}{2}}.
\end{equation}
Throughout the paper it is assumed that the sample size $n$ is smaller than the dimension $p$ of random vectors $\by_i$, $i=1,...,n$, that is $p/n \to c >1$ as $n \to \infty$. In this case the sample covariance matrix is singular and its inverse does not exist (cf., \cite{BodnarGuptaParolya2016,BodnarOkhrin2008,ImorivonRosen2020}). On the other side, the inverse of the population covariance matrix $\bSigma$ is present in many applications from finance, signal processing, biostatistics, environmentrics, etc. (see, e.g., ~\cite{BodnarDmytrivParolyaSchmid2019,BodnarParolyaSchmid2018,Holgersson2020}). In practice, the Moore-Penrose inverse is usually employed (cf., ~\cite{BodnarDetteParolya2016,ImorivonRosen2020}), while the other types of the generalize inverse (see, ~\cite{Wang2018}) can also be used.

In this paper we compare the spectral properties of two generalized inverses of the singular sample covariance matrix given by:
\begin{itemize}
\item Moore-Penrose inverse:
\begin{equation}\label{S+}
\bS_n^{+}=\left(\frac{1}{n}\bY_n\bY_n^{\prime}\right)^+=\frac{1}{n}\bY_n\left(\frac{1}{n}\bY_n^{\prime}\bY_n\right)^{-2}\bY_n^{\prime}
=\frac{1}{n}\bSigma^{\frac{1}{2}}\bX_n\left(\frac{1}{n}\bX_n^{\prime}\bSigma\bX_n\right)^{-2}\bX_n^{\prime}\bSigma^{\frac{1}{2}}
\end{equation}

\item Reflexive inverse
\begin{equation}\label{S-}
\bS_n^{-}=\bSigma^{-\frac{1}{2}}\left(\frac{1}{n}\bX_n\bX_n^{\prime}\right)^+\bSigma^{-\frac{1}{2}}
=\frac{1}{n}\bSigma^{-\frac{1}{2}}\bX_n\left(\frac{1}{n}\bX_n^{\prime}\bX_n\right)^{-2}\bX_n^{\prime}\bSigma^{-\frac{1}{2}}
\end{equation}
\end{itemize}

Although the Moore-Penrose inverse $\bS^+$ can directly be computed from the observation matrix $\bY_n$, the derivation of its stochastic properties might be challenging in some practical problems. On the other side, the computation of the considered reflexive inverse $\bS^-$ is not possible in practice, but the derivation of the stochastic properties is considerably simplified. The goal of this paper is to quantify the difference between the two generalized inverse matrices with the aim to develop a statistical methodology to assess the estimation errors when the Moore-Penrose inverse is used.

The rest of the paper is structured as follows. In the next section, we study the asymptotic properties of $\bS^+$ and $\bS^-$ by deriving two integral equations whose solutions are the Stieltjes transforms of the limiting spectral distributions of $\bS^+$ and $\bS^-$. These findings are used in Section 3, where the asymptotic behaviour of the Frobenius norm of the difference between $\bS^+$ and $\bS^-$ is investigated. Section 4 presents the results of a numerical illustration, while concluding remarks are provided in Section 5.

\section{Asymptotic properties of $\bS^+$ and $\bS^-$}

For a symmetric matrix $\bA$ we denote by $\lambda_1(\bA)\geq\ldots\geq\lambda_p(\bA)$ its ordered eigenvalues and  by  $F^{\bA}(t)$ the corresponding empirical distribution function (e.d.f.), that is
 $$
 F^{\bA}(t) =   \frac{1}{p}\sum\limits_{i=1}^{p}\mathbbm{1}{  \{  \lambda_i(\bA)  \leq t\}}  ,
 $$
where $\mathbbm{1}{\{\cdot\}}$ is the indicator function. Furthermore, for a function $G: \mathbbm{R} \to \mathbbm{R} $ of bounded variation the Stieltjes transform is introduced by
$$
  m_G(z)=\int\limits_{-\infty}^{+\infty}\frac{1}{\lambda-z}dG(\lambda); ~~~z\in\mathbbm{C}^+\equiv\{z\in\mathbbm{C}: \Im z>0\} \,.
$$
Note that there is a direct connection between $m_G(z)$ and moment generating function of $G$, $\Psi_G(z)$ given by
\[\Psi_G(z)=-\frac{1}{z}m_G\left(\frac{1}{z}\right)-1.\]

In Theorem \ref{th1} we present the expressions of the Stieltjes transform for $\bS^+$ and $\bS^-$.

\begin{theorem}\label{th1}
Let the $p\times n$ noise matrix $\bX_n$ consist of i.i.d. real random variables with zero mean and unit variance. Assume that $\bSigma_n$ is nonrandom symmetric and positive definite with a bounded spectral norm and the e.d.f. $H_n=F^{\bSigma_n}$ converges weakly to a nonrandom distribution function $H$. Then
\begin{enumerate}[(i)]
\item the e.d.f. $F^{\bS_n^+}$ converges weakly almost surely to some deterministic c.d.f.
  $P^+$ whose Stieltjes transformation $m_P$ satisfies the following equation:
$$
  m_{P^+}(z) = -\frac{1}{z}\Bigg (2- c^{-1} + \int_{-\infty}^{+\infty}\frac{dH(\tau)}{z\tau c(zm_{P^+}(z)+1)-1}\Bigg);
$$
\item the e.d.f. $F^{\bS_n^-}$ converges weakly almost surely to some deterministic c.d.f.
  $P^-$ whose Stieltjes transformation $m_{P^-}$ satisfies the following equation:
$$
  m_{P^-}(z)= -\frac{1}{z}-\frac{1}{z}\int\limits_{-\infty}^{+\infty}\frac{dH(\tau)}{\tau cz^2m_{P^-}(z)\left(1-\frac{c}{1-c-czm_{P^-}(z)}\right) -1}  \,.
$$
\end{enumerate}
\end{theorem}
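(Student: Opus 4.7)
The plan is to reduce both statements to applications of Silverstein's theorem for sample covariance matrices, combined with the elementary transformation of the Stieltjes transform under the reciprocal map $\lambda\mapsto 1/\lambda$.

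For part~(i) I would first use the fact that $AB$ and $BA$ share the same nonzero eigenvalues, applied to the representation~(\ref{S+}): a short cyclic-trace computation shows that the $n$ nonzero eigenvalues of $\bS_n^+$ are exactly the eigenvalues of the $n\times n$ matrix $\bC_n^{-1}$, where $\bC_n=\bX_n'\bSigma\bX_n/n$. Since $\bS_n^+$ has $p-n$ zero eigenvalues, this yields the decomposition $m_{F^{\bS_n^+}}(z) = -(p-n)/(pz) + (n/p)\,m_{F^{\bC_n^{-1}}}(z)$. The matrix $\bC_n$ is the companion of the sample covariance $\bSigma^{1/2}\bX_n\bX_n'\bSigma^{1/2}/n$, so Silverstein's theorem supplies a fixed-point equation for the limiting Stieltjes transform $\underline m_C$. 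Combining this with the elementary identity $m_{F^{\bC_n^{-1}}}(z) = -1/z - \underline m_C(1/z)/z^2$ (obtained by a change of variable $\mu\mapsto 1/\mu$ in the definition of the Stieltjes transform) and introducing $\alpha(z) := cz(zm_{P^+}(z)+1) = -\underline m_C(1/z)$, the Silverstein relation evaluated at $1/z$ rearranges into the displayed equation for $m_{P^+}$ after elementary algebra.

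For part~(ii) the same cyclic identity shows that the nonzero eigenvalues of $\bS_n^-$ coincide with those of the $n\times n$ matrix $\bK_n^{-1}\bJ_n\bK_n^{-1}$, where $\bK_n=\bX_n'\bX_n/n$ and $\bJ_n=\bX_n'\bSigma^{-1}\bX_n/n$ are two sample-covariance-type matrices built from the \emph{same} noise $\bX_n$. To access the Stieltjes transform of $\bS_n^-$ directly I would expand $\bS_n^- = \bSigma^{-1/2}\bU\bD^{-2}\bU'\bSigma^{-1/2}$ using the SVD of $\bX_n$ and apply the Woodbury identity to the resolvent $(\bS_n^- - z\bI_p)^{-1}$, which after simplification gives
\[
m_{F^{\bS_n^-}}(z) \;=\; -\frac{1}{z} \;-\; \frac{1}{pz^2}\,\mathrm{tr}\!\left[\bJ_n\bigl(\bK_n^2 - z^{-1}\bJ_n\bigr)^{-1}\right].
\]
The remaining task is to evaluate this normalized trace in the limit. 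I would regularize by replacing $(\bX_n\bX_n'/n)^+$ with $\bX_n(\bK_n+\alpha\bI_n)^{-2}\bX_n'/n$ for $\alpha>0$, apply Silverstein's theorem both to the Marchenko--Pastur companion $\bK_n$ and to the companion $\bJ_n$ (of the form $\bX_n'\bT_n\bX_n/n$ with $\bT_n=\bSigma^{-1}$), and pass to $\alpha\downarrow 0$. The factor $c/(1-c-czm_{P^-}(z))$ visible in the stated equation is the Marchenko--Pastur subordination function for $\bK_n$, and identifying it in the course of the calculation pins down the correct limiting equation.

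The main obstacle is clearly part~(ii). Part~(i) collapses onto a single Wishart-type $n\times n$ matrix that Silverstein's theorem handles directly, whereas in part~(ii) the trace couples $\bK_n$ and $\bJ_n$, which are dependent because both are functions of the same $\bX_n$. One cannot simply multiply the limiting spectral distributions of the two factors; controlling this coupling --- either through the regularization argument above, or equivalently by invoking the asymptotic freeness of $\bSigma^{-1}$ and $(\bX_n\bX_n'/n)^+$ and performing an $S$-transform calculation --- is where essentially all of the analytic work in the proof concentrates.
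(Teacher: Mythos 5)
Your part (i) is correct and is, in substance, the argument behind the result that the paper simply cites from Bodnar, Dette and Parolya (2016): the nonzero spectrum of $\bS_n^+$ coincides with that of $(\bX_n'\bSigma\bX_n/n)^{-1}$, and Silverstein's fixed-point equation combined with the reciprocal change of variable yields the stated equation for $m_{P^+}$. No issue there.

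Part (ii) is where the gap lies. Your reduction of the nonzero spectrum to $\mathbf{K}_n^{-1}\mathbf{J}_n\mathbf{K}_n^{-1}$ with $\mathbf{K}_n=\bX_n'\bX_n/n$ and $\mathbf{J}_n=\bX_n'\bSigma^{-1}\bX_n/n$, and the Woodbury identity giving $m_{F^{\bS_n^-}}(z)=-1/z-(pz^2)^{-1}\mathrm{tr}\bigl[(\mathbf{K}_n^2-z^{-1}\mathbf{J}_n)^{-1}\mathbf{J}_n\bigr]$, are both correct. But the plan to finish by applying Silverstein's theorem separately to the two companion matrices and letting the regularization parameter tend to zero does not close: Silverstein's theorem controls the resolvent of each of $\mathbf{K}_n$ and $\mathbf{J}_n$ individually, whereas the trace you must evaluate couples the two non-commuting matrices inside a single resolvent. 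No combination of the two marginal fixed-point equations determines this joint functional; one would need a genuinely new deterministic-equivalent computation for the coupled resolvent, and that computation is exactly the content you leave out. You name the correct repair only as an aside: the paper's actual route is to write $\bS_n^-$ (up to similarity) as the product of the \emph{deterministic} matrix $\bSigma^{-1}$ with the single random matrix $\tilde{\bS}_n^+=\frac{1}{n}\bX_n(\frac{1}{n}\bX_n'\bX_n)^{-2}\bX_n'$, invoke asymptotic freeness and the Belinschi--Bercovici multiplicative subordination relations, solve the resulting quadratic for the subordination function using the explicitly known Cauchy transform of the limit of $F^{\tilde{\bS}_n^+}$ (expressed through the Marchenko--Pastur Stieltjes transform), and substitute back into the Cauchy-transform identity. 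That decomposition --- one deterministic factor and one ``free'' random factor --- is what makes the problem tractable; your decomposition into two random matrices built from the same $\bX_n$ points in a harder direction. As written, the stated equation for $m_{P^-}$ is not established by your argument.
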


\begin{proof}
\begin{enumerate}[(i)]
\item The result of part (i) of the theorem was derived in Theorem 2.1 of ~\cite{BodnarDetteParolya2016}.
\item In order to prove the asymptotic result in the case of $\bS^-_n$, whose eigenvalues are the same as those of the product of two matrices $\bSigma^{-1}$ and $\tilde{\bS}_n^+=1/n\bX_n\left(\frac{1}{n}\bX_n^{\prime}\bX_n\right)^{-2}\bX_n^{\prime}$, we use subordination results of free probability (see, e.g., ~\cite{BB2007}). Namely for two Borel probability measures $\mu$ and $\nu$ on $[0, +\infty)$ (both are limits of $F^{\tilde{S}_n^+}$ and $F^{\bSigma^{-1}}$, respectively), there exist two unique analytic functions $F_1, F_2: \mathbbm{C}\setminus[0, +\infty)\rightarrow \mathbbm{C}\setminus[0, +\infty)$ such that
  \begin{eqnarray}\label{freeprob}
\frac{F_1(z)F_2(z)}{z}=\eta_{\mu}(F_1(z))=\eta_{\nu}(F_2(z))=\eta(z)\,,
  \end{eqnarray}
where $\eta(z)=1-\frac{z}{g_{P^-}(1/z)}$ is the eta transform and $g_{P^-}$ is the Cauchy transform (the negative Stieltjes transform), i.e., $g_{P^-}(z)=-m_{P^-}(z)$, for the limit of $F^{\bS^-_n}$ denoted by $P^-$. In particular, from the first and the second equalities in \eqref{freeprob} we get that for any $z\in\mathbbm{C}^+=\{z: \Im(z)>0\}$ such that $F_2$ is analytic at $z$, the function $F_2(z)$ satisfies the following equation:\footnote{In fact, a similar equation also holds for $F_1$ when one replaces $\mu$ and $\nu$ in \eqref{subordinator} but for our purposes it is enough to know one of them.}
    \begin{eqnarray}\label{subordinator}
      \eta_\nu(F_2(z))=\eta_\mu\left(\frac{\eta_\nu(F_2(z))z}{F_2(z)}\right)\,.
    \end{eqnarray}
On the other hand, from the last equality in \eqref{freeprob} we have that $g_{P^-}(z)$ satisfies the equality
    \begin{eqnarray}\label{cauchy}
      g_{P^-}(z)=\frac{1}{zF_2(1/z)}g_\nu\left(\frac{1}{F_2(1/z)}\right)\,.
    \end{eqnarray}
Thus, we need first to find the so called subordination function $F_2(z)$ from \eqref{subordinator} and plug it into \eqref{cauchy}. For simplicity we further suppress the subindex of $g_{P^-}$.

Let
    \begin{eqnarray}
      \Theta(z)=\frac{F_2(z)}{z\eta(z)}
    \end{eqnarray}
and rewrite \eqref{subordinator} using $\eta(z)=\eta_\nu(F_2(z))$ in the following way
    \begin{eqnarray}
 \eta(z)=   \eta_\mu\left( \frac{\eta(z)z}{F_2(z)}\right)=\eta_\mu\left(\frac{1}{\Theta(z)}\right)\,.
    \end{eqnarray}

Using the definition of the eta transform we get
    \begin{eqnarray*}
      1-\frac{1}{\frac{1}{z}g(1/z)}=1-\frac{1}{\Theta(z)}\frac{1}{g_\mu(\Theta(z))}
    \end{eqnarray*}
and, hence,
    \begin{eqnarray}\label{submod}
     \frac{1}{z}g(1/z)=\Theta(z)g_\mu(\Theta(z))\,.
    \end{eqnarray}

From ~\cite{BodnarDetteParolya2016} we get that
\begin{eqnarray}
  g_\mu(z)=-m_{P^-}(z)=\frac{1}{z}\left(2-c^{-1}+\frac{m_{MP}(1/z)}{z} \right)\,,
\end{eqnarray}
where $m_{MP}(z)$ is the Stieltjes transformation of the Marchenko-Pastur law given by (see, e.g., ~\cite{BaiSilverstein2010})
\begin{eqnarray}
  m_{MP}(z)=\frac{1}{2cz}\left(1-c-z+\sqrt{(1+c-z)^2-4c}\right)\,.
\end{eqnarray}
Thus, the equation \eqref{submod} becomes
\begin{eqnarray*}
  \frac{1}{z}g(1/z)&=&\Theta(z)\frac{1}{\Theta(z)}\left(2-c^{-1}+\frac{m_{MP}(1/\Theta(z))}{\Theta(z)} \right)\\
                   &=& 2-c^{-1}+\frac{1-c-\frac{1}{\Theta(z)}+\sqrt{(1+c-\frac{1}{\Theta(z)})^2-4c}}{\Theta(z)2c\frac{1}{\Theta(z)}}\\
  &=&\frac{1}{2c}\left(2c-2+(1+c-\frac{1}{\Theta(z)})+\sqrt{(1+c-\frac{1}{\Theta(z)})^2-4c} \right)\,,
\end{eqnarray*}
or, equivalently, by rearranging terms we obtain
\begin{eqnarray}
  2\left(c(\frac{1}{z}g(1/z)-1)+1\right)-(1+c-\frac{1}{\Theta(z)})=\sqrt{(1+c-\frac{1}{\Theta(z)})^2-4c}\,,
\end{eqnarray}
where by squaring of both sides, we get
\[\left(c(\frac{1}{z}g(1/z)-1)+1\right)^2-\left(c(\frac{1}{z}g(1/z)-1)+1\right)(1+c-\frac{1}{\Theta(z)})+c=0\]
or,
 % &&\left(c(\frac{1}{z}g(1/z)-1)+1\right)^2-c(\frac{1}{z}g(1/z)-1)(1+c-\frac{1}{\Theta(z)})-1-c+\frac{1}{\Theta(z)}+c=0\\
\begin{eqnarray*}
  &&1+c^2(\frac{1}{z}g(1/z)-1)^2+2c(\frac{1}{z}g(1/z)-1)-1+\frac{1}{\Theta(z)}\\
  &-&c(\frac{1}{z}g(1/z)-1)(1+c-\frac{1}{\Theta(z)})=0,\\
\end{eqnarray*}
which yields
\begin{equation}\label{eq1_proof_th1}
 c(\frac{1}{z}g(1/z)-1)+(1-c)+\frac{1}{\Theta(z)c(\frac{1}{z}g(1/z)-1)}+\frac{1}{\Theta(z)}=0\,.
\end{equation}

From \eqref{eq1_proof_th1} we find $\Theta(z)$ as a function of $g(1/z)$ expressed as
\begin{eqnarray}
  \Theta(z)=\frac{1+c(\frac{1}{z}g(1/z)-1)}{c(\frac{1}{z}g(1/z)-1)(c-1-c(\frac{1}{z}g(1/z)-1))}
\end{eqnarray}
or, in terms of $F_2(z)$ given by
\begin{eqnarray}
  F_2(z)&=&\frac{z-cz+cg(1/z)}{c(\frac{1}{z}g(1/z)-1)(c-1-c(\frac{1}{z}g(1/z)-1))}\frac{\frac{1}{z}g(1/z)-1}{\frac{1}{z}g(1/z)}\nonumber\\
        &=&\frac{z-cz+cg(1/z)}{c(-z+2cz-cg(1/z))}\frac{z^2}{g(1/z)}\,.
 % &=&-\frac{z^2}{g(1/z)}\left(c^{-1}+\frac{z}{z-2cz+cg(1/z)}\right)
\end{eqnarray}
At last, we use the property $g_{\nu}(z)=1/z-1/z^2g_H(1/z)$ and plug $F_2(z)$ in \eqref{cauchy}. This leads to
\begin{eqnarray*}
  g(z)&=&\frac{1}{zF_2(1/z)}\left(F_2(1/z)-F^2_2(1/z)g_H(F_2(1/z))\right)\\
            &=&\frac{1}{z}-\frac{F_2(1/z)}{z}g_H(F_2(1/z))=\frac{1}{z}-\frac{F_2(1/z)}{z}\int\limits_{-\infty}^{+\infty}\frac{dH(\tau)}{F_2(1/z)-\tau}\\
  &=&  \frac{1}{z}+\int\limits_{-\infty}^{+\infty}\frac{dH(\tau)}{\tau z/F_2(1/z)-z}=\frac{1}{z}+\int\limits_{-\infty}^{+\infty}\frac{dH(\tau)}{-\tau cz^3g(z)\left(1-\frac{c}{1-c+czg(z)}\right) -z}\,.
\end{eqnarray*}
The latter can be rewritten in terms of Stieltjes transform $m_{P^-}(z)$ by
\begin{eqnarray}
  m_{P^-}(z)=-\frac{1}{z}-\frac{1}{z}\int\limits_{-\infty}^{+\infty}\frac{dH(\tau)}{\tau cz^2m_{P^-}(z)\left(1-\frac{c}{1-c-czm_{P^-}(z)}\right) -1}\,
\end{eqnarray}
and the second part of theorem is proved.
  \end{enumerate}
 \end{proof}

Although the Stieltjes transforms of two generalize inverse matrices are quite different, they have one in common: besides the fact that they are equal for $\bSigma=\bI$, we also observe that they become close to each other in case $c$ tends to one from the right. Thus, if $p/n$ is close to one from the right, then there should be no large difference in using the Moore-Penrose inverse $\bS^+$ or the generalized reflexive inverse $\bS^-$ asymptotically.

\section{Quantification the difference between the asymptotic behaviour of $\bS^+$ and $\bS^-$}
Using the Stieltjes transforms computed in the case of $\bS^+$ and $\bS^-$, we are able to compare their moments. In order to quantify this difference more carefully we consider the quadratic or the so-called Frobenius norm of the difference between $\bS^+$ and $\bS^-$ given by
\begin{eqnarray*}
\|\bS^--\bS^+\|^2_F&=&\mbox{tr}\left((\bS^--\bS^+)(\bS^--\bS^+)^\top\right)\\
&=&\mbox{tr}\left(\bS^-\bS^-\right)-2\mbox{tr}\left(\bS^+\bS^-\right)+\mbox{tr}\left(\bS^+\bS^+\right)\\
&=&\mbox{tr}\left(\bS^-\bS^-\right)-\mbox{tr}\left(\bS^+\bS^+\right)\\
&=&\|\bS^-\|^2_F-\|\bS^+\|^2_F\,.
\end{eqnarray*}

In Theorem \ref{frob} the asymptotic equivalents for both Frobenius norms are provided.

\begin{theorem}\label{frob} Under assumptions of Theorem \ref{th1} it holds almost surely as $p/n\to c>1$
  {
  \begin{eqnarray}
   && 1/p \|\bS^+\|^2_F \longrightarrow c^{-1}\left(\frac{1}{m^2_{\underline{F}}(0)}-c\int\limits_{-\infty}^{+\infty}\frac{\tau^2 dH(\tau)}{(1+\tau m_{\underline{F}}(0))^2 }\right)^{-1},\\
    && 1/p \|\bS^-\|^2_F \longrightarrow \frac{(1+c(c-1))}{c^2(c-1)^3}\left(\int\limits_{-\infty}^{+\infty}\frac{dH(\tau)}{\tau} \right)^2+\frac{1}{(c(c-1))^2}\int\limits_{-\infty}^{+\infty}\frac{dH(\tau)}{\tau^2}\,,
  \end{eqnarray}
} where $m_{\underline{F}}(0)$ satisfies the following equation
\begin{eqnarray*}
       \frac{1}{m_{\underline{F}}(0)}=c\int\limits_{-\infty}^{+\infty}\frac{\tau dH(\tau)}{1+\tau m_{\underline{F}}(0)}\,.
\end{eqnarray*}
\end{theorem}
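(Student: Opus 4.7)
The plan is to identify $p^{-1}\|\bS^{\pm}\|_F^2 = p^{-1}\mathrm{tr}((\bS^{\pm})^2) = \int \lambda^2\, dF^{\bS_n^{\pm}}(\lambda)$ as the second moment of the empirical spectral distribution and to extract it from the Stieltjes transform equations of Theorem~\ref{th1}. I would first upgrade the almost sure weak convergence $F^{\bS_n^{\pm}} \Rightarrow P^{\pm}$ to convergence of the second moment: since $c > 1$, the Bai--Yin theorem applied to the $\bSigma$-weighted sample covariance guarantees that the smallest nonzero eigenvalue of $\bS_n$ stays almost surely bounded away from zero, so $\lambda_{\max}(\bS^{\pm}_n)$ is almost surely bounded, supplying the uniform integrability needed to pass $\lambda^2$ through the weak limit. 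The targets then become $M_2^{\pm} := \int \lambda^2\, dP^{\pm}(\lambda)$.

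For part~(i), the most direct route exploits the fact that the nonzero eigenvalues of $\bS_n$ coincide with those of the companion matrix $\underline{\bS}_n = n^{-1}\bX_n' \bSigma \bX_n$, so
\begin{equation*}
p^{-1}\|\bS_n^+\|_F^2 = p^{-1}\mathrm{tr}\bigl(\underline{\bS}_n^{-2}\bigr) = (n/p)\cdot n^{-1}\mathrm{tr}\bigl(\underline{\bS}_n^{-2}\bigr) \longrightarrow c^{-1}\int \lambda^{-2}\, d\underline{F}(\lambda) = c^{-1} m_{\underline{F}}'(0).
\end{equation*}
Differentiating the Silverstein fixed-point equation $m_{\underline{F}}(z)^{-1} = -z + c\int \tau(1+\tau m_{\underline{F}}(z))^{-1}\, dH(\tau)$ at $z = 0$ gives
\begin{equation*}
m_{\underline{F}}'(0) = \biggl[\frac{1}{m_{\underline{F}}^2(0)} - c\int \frac{\tau^2\, dH(\tau)}{(1+\tau m_{\underline{F}}(0))^2}\biggr]^{-1},
\end{equation*}
which produces the claimed formula; the auxiliary equation for $m_{\underline{F}}(0)$ comes from evaluating the Silverstein equation itself at $z = 0$.

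For part~(ii) there is no analogous companion-matrix reduction, since $\bS^-$ entangles $\bSigma$ and the sample covariance on both sides, so I would work directly with the Stieltjes transform equation from Theorem~\ref{th1}(ii). Inserting the Laurent expansion $m_{P^-}(z) = -z^{-1} - M_1^- z^{-2} - M_2^- z^{-3} + O(z^{-4})$ and writing $A(z) := cz^2 m_{P^-}(z)$ and $B(z) := 1 - c/(1-c-czm_{P^-}(z))$, a systematic expansion of each factor in $1/z$ gives
\begin{equation*}
\tau A(z) B(z) - 1 = \tau c(c-1) z - \bigl(1 + \tau c(1-c+c^2) M_1^-\bigr) + O(1/z).
\end{equation*}
Expanding $(\tau AB - 1)^{-1}$ as a geometric series in $1/z$, integrating term by term against $dH(\tau)$, and matching the $z^{-2}$ and $z^{-3}$ coefficients against the Laurent expansion of $m_{P^-}$ produces
\begin{equation*}
M_1^- = \frac{1}{c(c-1)}\int \frac{dH(\tau)}{\tau}, \quad M_2^- = \frac{1}{(c(c-1))^2}\int \frac{dH(\tau)}{\tau^2} + \frac{c(1-c+c^2)\, M_1^-}{(c(c-1))^2}\int \frac{dH(\tau)}{\tau},
\end{equation*}
and substituting $M_1^-$ together with the identity $1 - c + c^2 = 1 + c(c-1)$ recovers the theorem's formula.

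The main obstacle is the bookkeeping for part~(ii): because $m_{P^-}(z)$ appears nested inside itself through both $czm$ and the fraction $c/(1-c-czm)$, careful tracking is required so that no contribution at orders $z^{-2}$ or $z^{-3}$ is dropped when the nested geometric expansions are multiplied together. No further probabilistic input beyond the uniform control of $\lambda_{\max}(\bS^{\pm}_n)$ is needed; the entire quantitative content comes from a Laurent expansion of the Stieltjes transforms combined with one differentiation of the Silverstein fixed-point equation.
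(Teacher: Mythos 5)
Your proposal is correct and follows essentially the same route as the paper: both reduce part (i) to computing $c^{-1}m_{\underline{F}}'(0)$ by differentiating the Silverstein fixed-point equation at zero, and both obtain part (ii) by extracting the second moment from the fixed-point equation of Theorem~\ref{th1}(ii) --- your Laurent expansion of $m_{P^-}$ at infinity is exactly the paper's Taylor expansion of $\Gamma^-(z)=m_{P^-}(1/z)/z$ at $z=0$, and your coefficients $M_1^-$, $M_2^-$ match the paper's $-\Gamma^{-\prime}(0)$ and $-\tfrac12\Gamma^{-\prime\prime}(0)$. Your trace identity $\mathrm{tr}\bigl((\bS_n^+)^2\bigr)=\mathrm{tr}\bigl(\underline{\bS}_n^{-2}\bigr)$ and the explicit uniform-integrability remark are small presentational improvements over the paper's formal derivative calculation, not a different method.
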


\begin{proof} In order to prove the statement of the theorem, we rewrite the Frobenius norm of random matrix $\bA$ for all $z\in\mathbbm{C}^+$ in the following way
  \begin{eqnarray}\label{frob_norm}
    1/p \|\bA\|^2_F=1/p \text{tr}(\bA^2)=-\left.\frac{1}{2}\frac{\partial^2}{\partial z^2}\frac{1}{p}\frac{\text{tr}(\bA-1/z\bI)^{-1}}{z}\right|_{z=0}
    =-\left.\frac{1}{2}\frac{\partial^2}{\partial z^2}\frac{m_{F^{\bA}}(1/z)}{z}\right|_{z=0}\,.
  \end{eqnarray}
  
We start with $\bA=\bS^+$. Let
  \[
  \Gamma_n(z)=\frac{m_{F^{\bS^+}}(1/z)}{z}\]
  Due to Theorem \ref{th1}.i we get that $\Gamma^+_n(z)$ converges almost surely to deterministic $\Gamma^+(z)$, which satisfies the following asymptotic equation
  \begin{eqnarray}\label{Gamma_plus}
 \Gamma^+(z) = -\Bigg (2- c^{-1} + \int_{-\infty}^{+\infty}\frac{zdH(\tau)}{\tau c(\Gamma^+(z)+1)-z}\Bigg)\,.
  \end{eqnarray}

For the computation of $1/p \|\bS^+\|^2_F$, the quantities $\Gamma^+(z)$, $\frac{\partial }{\partial z}\Gamma^+(z)$ and $\frac{\partial^2 }{\partial z^2}\Gamma^+(z)$ should be evaluated at zero. We rewrite \eqref{Gamma_plus} in an equivalent way
  \begin{eqnarray}\label{simple}
    \Gamma^+(z)=-1-c^{-1}zm_{\underline{F}}(z),
  \end{eqnarray}
  where $m_{\underline{F}}(z)$ is the limiting Stieltjes transform of $m_{F^{1/n\bY^\top\bY}}$, which satisfies the following equation (see, e.g., ~\cite{SilversteinBai1995} 
  \begin{eqnarray}\label{bai_sil}
    m_{\underline{F}}(z)=\left(c\int\limits_{-\infty}^{+\infty}\frac{\tau dH(\tau)}{1+\tau m_{\underline{F}}(z)}-z\right)^{-1}\,.
  \end{eqnarray}

The advantage of \eqref{simple} over \eqref{Gamma_plus} lies in the calculation of the derivatives, which can be done easier by the former. Since the quantity $m_{\underline{F}}(z)$ is bounded at zero for $c>1$, we immediately obtain
  \begin{eqnarray}
    \Gamma^+(0)\equiv \lim\limits_{z\to0^+}\Gamma^+(z)=-1\,.
  \end{eqnarray}

The first derivative of $\Gamma^+(z)$ is given by
  \begin{eqnarray}\label{first_der_gamma_plus}
 \frac{\partial}{\partial z}   \Gamma^+(z)=-c^{-1}m_{\underline{F}}(z)-c^{-1}z\frac{\partial}{\partial z}m_{\underline{F}}(z)\,
  \end{eqnarray}
and, thus, taking the limit $z\to0^+$ and using that $\frac{\partial}{\partial z}m_{\underline{F}}(z)=O(1)$ as $z\to0^+$, we get
  \begin{eqnarray}
  \Gamma^{'\;+}(0)\equiv\lim\limits_{z\to0^+}\frac{\partial}{\partial z}\Gamma^+(z)=-c^{-1}m_{\underline{F}}(0)\,,
  \end{eqnarray}
where $m_{\underline{F}}(0)$ satisfies the equation
  \begin{eqnarray}\label{bai_sil_zero}
       \frac{1}{m_{\underline{F}}(0)}=c\int\limits_{-\infty}^{+\infty}\frac{\tau dH(\tau)}{1+\tau m_{\underline{F}}(0)}\,.
  \end{eqnarray}

The second derivative of $\Gamma^+(z)$ is equal to
  \begin{eqnarray}\label{sec_der_Gamma_plus}
    \frac{\partial^2}{\partial z^2}   \Gamma^+(z)&=& -2c^{-1}\frac{\partial}{\partial z}m_{\underline{F}}(z)- c^{-1}z\frac{\partial^2}{\partial z^2}m_{\underline{F}}(z).
%    &=&-2c^{-1}\frac{\partial}{\partial z}m_{\underline{F}}(z)+O(z)\,.
  \end{eqnarray}
Denoting $m'_{\underline{F}}(z)=\frac{\partial}{\partial z}m_{\underline{F}}(z)$ and using \eqref{bai_sil} together with \eqref{bai_sil_zero}, we obtain
  \begin{eqnarray*}
 m'_{\underline{F}}(0)\equiv \lim\limits_{z\to0^+}m'_{\underline{F}}(z)  =\lim\limits_{z\to0^+}\frac{c\int\limits_{-\infty}^{+\infty}\frac{\tau^2 m'_{\underline{F}}(z)dH(\tau)}{(1+\tau m_{\underline{F}}(z))^2 }+1}{\left(c\int\limits_{-\infty}^{+\infty}\frac{\tau dH(\tau)}{1+\tau m_{\underline{F}}(z)} \right)^2}=\frac{ m'_{\underline{F}}(0)c\int\limits_{-\infty}^{+\infty}\frac{\tau^2 dH(\tau)}{(1+\tau m_{\underline{F}}(0))^2 }+1}{ \frac{1}{m^2_{\underline{F}}(0)}}
  \end{eqnarray*}
  or, equivalently,
  \begin{eqnarray}\label{2q1_th2}
    m'_{\underline{F}}(0)=\left(\frac{1}{m^2_{\underline{F}}(0)}-c\int\limits_{-\infty}^{+\infty}\frac{\tau^2 dH(\tau)}{(1+\tau m_{\underline{F}}(0))^2 }  \right)^{-1}\,.
  \end{eqnarray}

Finally, the application of \eqref{sec_der_Gamma_plus} and \eqref{2q1_th2} together with $\frac{\partial^2}{\partial z^2}m_{\underline{F}}(z)=O(1)$ as $z\to0^+$ leads to
  \begin{eqnarray*}
    &&\Gamma^{''\;+}(0)\equiv \lim\limits_{z\to0^+}  \frac{\partial^2}{\partial z^2}   \Gamma^+(z)=-2\frac{c^{-1}}{\frac{1}{m^2_{\underline{F}}(0)}-c\int\limits_{-\infty}^{+\infty}\frac{\tau^2 dH(\tau)}{(1+\tau m_{\underline{F}}(0))^2 }}\,.
       %= -2\frac{m^2_{\underline{F}}(0)}{c-\int\limits_{-\infty}^{+\infty}\frac{\tau^2m^2_{\underline{F}}(0) dH(\tau)}{(1+\tau m_{\underline{F}}(0))^2 }}\\
    %&=&-2\frac{m^2_{\underline{F}}(0)}{c+\int\limits_{-\infty}^{+\infty}\frac{\tau m_{\underline{F}}(0) dH(\tau)}{1+\tau m_{\underline{F}}(0) }-\int\limits_{-\infty}^{+\infty}\frac{dH(\tau)}{1+\tau m_{\underline{F}}(0) }+\int\limits_{-\infty}^{+\infty}\frac{dH(\tau)}{(1+\tau m_{\underline{F}}(0))^2 }}\\
    %&=&-2\frac{m^2_{\underline{F}}(0)}{c+c^{-1}-\int\limits_{-\infty}^{+\infty}\frac{dH(\tau)}{1+\tau m_{\underline{F}}(0) }+\int\limits_{-\infty}^{+\infty}\frac{dH(\tau)}{(1+\tau m_{\underline{F}}(0))^2 }}\,.
  \end{eqnarray*}
  Now, the first result of the theorem follows from \eqref{frob_norm}. 
  
Similarly, for the second identity of Theorem \ref{frob} we denote $\Gamma^-(z)$ as a limit of
  \begin{eqnarray}
    \Gamma^-_n(z)=\frac{m_{F^{\bS^-}}(1/z)}{z}.
  \end{eqnarray}
Then using Theorem \ref{th1}, the limiting function $\Gamma^-(z)$ satisfies the following asymptotic equation
  \begin{eqnarray}
    \Gamma^-(z)=-1-\int\limits_{-\infty}^{+\infty}\frac{zdH(\tau)}{\tau c\Gamma^-(z)\left(1-\frac{c}{1-c-c\Gamma^-(z)}\right) -z}\,.
  \end{eqnarray}
Here we immediately get that $\Gamma^-(z)$ is bounded at zero and it holds that
  \begin{eqnarray}
 &&  \lim\limits_{z\to0^+}\Gamma^-(z)=-1,\\
 && \lim\limits_{z\to0^+}\frac{\partial}{\partial z} \Gamma^-(z)=-\frac{1}{c(c-1)}\int\limits_{-\infty}^{+\infty}\frac{dH(\tau)}{\tau}\,,
  \end{eqnarray}
  where, the result for the first derivative of $\Gamma^-(z)$ follows from the identity
  \begin{eqnarray*}
  &&\left(1-z\int\limits_{-\infty}^{+\infty}\frac{\tau c\left(1- \frac{c(1-c)}{(1-c-c\Gamma^-(z))^2}\right)dH(\tau)}{(\tau c\Gamma^-(z)\left(1-\frac{c}{1-c-c\Gamma^-(z)}\right) -z)^2} \right)\frac{\partial}{\partial z} \Gamma^-(z)\\
    &=& -\int\limits_{-\infty}^{+\infty}\frac{\tau c\Gamma^-(z)\left(1-\frac{c}{1-c-c\Gamma^-(z)}\right)dH(\tau)}{(\tau c\Gamma^-(z)\left(1-\frac{c}{1-c-c\Gamma^-(z)}\right) -z)^2}\\
 %   &=&-\int\limits_{-\infty}^{+\infty}\frac{dH(\tau)}{(\tau c\Gamma^-(z)\left(1-\frac{c}{1-c-c\Gamma^-(z)}\right) -z)}-\int\limits_{-\infty}^{+\infty}\frac{zdH(\tau)}{(\tau c\Gamma^-(z)\left(1-\frac{c}{1-c-c\Gamma^-(z)}\right) -z)^2}\,.
    &=& -\int\limits_{-\infty}^{+\infty}\frac{dH(\tau)}{(\tau c\Gamma^-(z)\left(1-\frac{c}{1-c-c\Gamma^-(z)}\right) -z)}-\int\limits_{-\infty}^{+\infty}\frac{zdH(\tau)}{(\tau c\Gamma^-(z)\left(1-\frac{c}{1-c-c\Gamma^-(z)}\right) -z)^2}\,.
  \end{eqnarray*}
  
For the second derivative we get the following equality
  \begin{eqnarray*}
    (1+O(z))\frac{\partial^2}{\partial z^2} \Gamma^-(z)&=&2\frac{\partial}{\partial z} \Gamma^-(z)\int\limits_{-\infty}^{+\infty}\frac{\tau c\left(1- \frac{c(1-c)}{(1-c-c\Gamma^-(z))^2}\right)dH(\tau)}{(\tau c\Gamma^-(z)\left(1-\frac{c}{1-c-c\Gamma^-(z)}\right) -z)^2}\\
                                                       % &+& \frac{\partial}{\partial z} \Gamma^-(z)\left(\int\limits_{-\infty}^{+\infty}\frac{\tau c\left(1- \frac{c(1-c)}{(1-c-c\Gamma^-(z))^2}\right)dH(\tau)}{(\tau c\Gamma^-(z)\left(1-\frac{c}{1-c-c\Gamma^-(z)}\right) -z)^2}\right)\\
                     &-&             2 \int\limits_{-\infty}^{+\infty}\frac{dH(\tau)}{(\tau c\Gamma^-(z)\left(1-\frac{c}{1-c-c\Gamma^-(z)}\right) -z)^2}+O(z)\,
  \end{eqnarray*}
  and taking the limit $z\to0^+$ from both sides leads to
  \begin{eqnarray}\label{Gamma_minus_secprime}
    \lim\limits_{z\to0^+}\frac{\partial^2}{\partial z^2} \Gamma^-(z)&=&2\frac{\partial}{\partial z} \Gamma^-(z)\frac{c(1-c(1-c))}{c^2(c-1)^2}\int\limits_{-\infty}^{+\infty}\frac{dH(\tau)}{\tau}-2\frac{1}{(c(c-1))^2}\int\limits_{-\infty}^{+\infty}\frac{dH(\tau)}{\tau^2}\nonumber\\
    &=& -2\frac{(1+c(c-1))}{c^2(c-1)^3}\left(\int\limits_{-\infty}^{+\infty}\frac{dH(\tau)}{\tau} \right)^2-2\frac{1}{(c(c-1))^2}\int\limits_{-\infty}^{+\infty}\frac{dH(\tau)}{\tau^2}\,.
  \end{eqnarray}
The second statement of the theorem follows now from \eqref{Gamma_minus_secprime} and \eqref{frob_norm}.
\end{proof}

For a better visualization of the results of Theorem \ref{frob} from the statistical point of view, we present its empirical counterpart. Here, the almost sure convergence of the asymptotic equivalents are summarize in Corollary \ref{emp_frob}.

\begin{corollary}\label{emp_frob} Under assumptions of Theorem \ref{th1} for $p/n\to c>1$ holds almost surely
  \begin{eqnarray}
  && 1/p\left|  \|\bS^+\|^2_F - c^{-1}\left(\frac{p}{m^2_{\underline{F}}(0)}-c||(\bSigma+m_{\underline{F}}(0)\bI)^{-1} ||^2_F  \right)^{-1}   \right| \to 0\label{emp_frob_splus}\\
  && 1/p\left|   \|\bS^-\|^2_F - \frac{(1+c(c-1))}{c^2(c-1)^3}\frac{1}{p}\left(\text{tr}(\bSigma^{-1})\right)^2-\frac{1}{(c(c-1))^2}||\bSigma^{-1}||_F^2  \right| \to 0\label{emp_frob_sminus}
  \end{eqnarray}
where $m_{\underline{F}}(0)$ satisfies asymptotically (approximately) the following equation
\begin{eqnarray*}
    \frac{p}{m_{\underline{F}}(0)}= c\cdot\text{tr}(\bSigma+m_{\underline{F}}(0)\bI)^{-1}\,.
\end{eqnarray*}
\end{corollary}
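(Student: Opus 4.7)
The plan is to derive Corollary \ref{emp_frob} as a direct empirical restatement of Theorem \ref{frob}: each integral $\int f(\tau)\,dH(\tau)$ on the right-hand sides of Theorem \ref{frob} is replaced by its finite-sample counterpart $\int f(\tau)\,dH_n(\tau) = \tfrac{1}{p}\mathrm{tr}(f(\bSigma_n))$, and the corollary then follows once the replacement error is shown to vanish almost surely. The engine behind this translation is the standard fact that, for $\bSigma_n$ positive definite with $\|\bSigma_n\|$ bounded above uniformly in $n$ and (since $H$ assigns no mass at $0$) eigenvalues bounded away from $0$, the weak convergence $H_n\Rightarrow H$ implies $\int f\,dH_n \to \int f\,dH$ almost surely for every $f$ that is continuous and bounded on the eventual support of $H_n$.

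For \eqref{emp_frob_sminus}, which is the easier of the two, the limit in Theorem \ref{frob}(ii) depends on $H$ only through $\int \tau^{-1}\,dH(\tau)$ and $\int \tau^{-2}\,dH(\tau)$. Both integrands are continuous and bounded on the spectrum of $\bSigma_n$ uniformly in $n$ under the spectral bounds above, so one gets $\tfrac{1}{p}\mathrm{tr}(\bSigma_n^{-1})\to\int\tau^{-1}\,dH(\tau)$ and $\tfrac{1}{p}\|\bSigma_n^{-1}\|_F^2 = \tfrac{1}{p}\mathrm{tr}(\bSigma_n^{-2})\to\int\tau^{-2}\,dH(\tau)$ almost surely. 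Substituting these into Theorem \ref{frob}(ii) produces \eqref{emp_frob_sminus}.

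For \eqref{emp_frob_splus}, I would run the same strategy on Theorem \ref{frob}(i), but must also handle the implicit defining equation for $m_{\underline{F}}(0)$. The idea is to introduce a finite-sample surrogate $m_n$, defined as the unique positive solution of the empirical fixed-point equation stated at the end of the corollary, to replace the trace $\tfrac{1}{p}\|(\bSigma_n+m_n\bI)^{-1}\|_F^2 = \tfrac{1}{p}\mathrm{tr}((\bSigma_n+m_n\bI)^{-2})$ in place of the integral $\int\tau^2/(1+\tau m_{\underline{F}}(0))^2\,dH(\tau)$, and to show that $m_n\to m_{\underline{F}}(0)$ almost surely so that the substitution is justified. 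Convergence of $m_n$ reduces to the continuity of the unique positive fixed point of a strictly monotone, Lipschitz map under uniform perturbation of its defining data, which is standard in this setting.

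The main obstacle will be the fixed-point stability step: an $o(1)$ perturbation of the defining equation must produce an $o(1)$ perturbation of its solution, so that $m_n$ can safely replace $m_{\underline{F}}(0)$ inside the nonlinear expression appearing on the right-hand side of \eqref{emp_frob_splus}. This reduces to verifying that the derivative of the defining functional at $m_{\underline{F}}(0)$ is bounded away from $1$, which in turn follows from strict monotonicity of the Stieltjes transform of $\underline{F}$ on $(0,\infty)$ together with the assumption $c>1$, which keeps the relevant root in the interior of the analyticity domain and bounded away from the boundary singularity at $z=0$. Once this stability is in hand, the error propagated through the inverse in \eqref{emp_frob_splus} is controlled by the boundedness of the denominator away from zero, and the conclusion of the corollary follows.
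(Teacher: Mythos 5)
Your approach coincides with the paper's: the corollary is given there without any separate proof, being treated as the immediate empirical counterpart of Theorem \ref{frob} obtained exactly as you propose, namely by substituting $H_n=F^{\bSigma_n}$ for $H$, invoking the weak convergence $H_n\Rightarrow H$ together with the uniform spectral bounds on $\bSigma_n$, and (in the $\bS^+$ case) the stability of the fixed point defining $m_{\underline{F}}(0)$ under perturbation of the defining equation --- a point you handle more explicitly than the paper does. One caveat you inherit from the corollary's own statement rather than introduce yourself: the exact empirical counterpart of $\int \tau^2(1+\tau m)^{-2}\,dH(\tau)$ is $\tfrac{1}{p}\mathrm{tr}\bigl((\bSigma^{-1}+m\bI)^{-2}\bigr)$ and that of $c\int \tau(1+\tau m)^{-1}\,dH(\tau)$ is $\tfrac{c}{p}\mathrm{tr}\bigl((\bSigma^{-1}+m\bI)^{-1}\bigr)$, not the corresponding traces with $\bSigma$, so the identification you make when replacing the integral by $\tfrac{1}{p}\|(\bSigma_n+m_n\bI)^{-1}\|_F^2$ is only valid if $\bSigma$ is read as $\bSigma^{-1}$ in \eqref{emp_frob_splus} and in the displayed fixed-point equation.
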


The results of Corollary \ref{emp_frob} show how the Frobenius norms of both inverses are connected with their population counterpart $||\bSigma^{-1}||^2_F$. Namely, looking on the asymptotic behavior of $\|\bS^+\|^2_F$ one can easily deduce that it is not possible to estimate $||\bSigma^{-1}||^2_F$ consistently using the Moore-Penrose inverse because of the nonlinearity which is present in $(\bSigma+m_{\underline{F}}(0)\bI)^{-1}$. On the other side, it is doable by reflexive generalized inverse $\bS_n^-$. Indeed, using the proof of Theorem \ref{frob} one can find that
$$1/p\left|\text{tr}(\bS_n^-)- \frac{1}{c(c-1)}\text{tr}(\bSigma^{-1})\right|\to 0\,, $$
which together with \eqref{emp_frob_sminus} implies that
\begin{eqnarray}
  1/p\left| (c(c-1))^2  \left[\|\bS^-\|^2_F - \left(\frac{1}{(c-1)}+c\right)\frac{1}{p}\left(\text{tr}(\bS^{-})\right)^2\right]-||\bSigma^{-1}||_F^2  \right| \to 0 \,
\end{eqnarray}
almost surely.

\section{Numerical illustration}
Using the results of the previous section, we present several numerical results to quantify the difference $\|\bS^--\bS^+\|^2_F=\|\bS^-\|^2_F-\|\bS^+\|^2_F$ for some $\bSigma$. In order to avoid the normalization $1/p$ we will use the normalized Frobenius loss (NFL) expressed as
\begin{equation}\label{NFL}
 NFL=\frac{\|\bS^--\bS^+\|^2_F}{\|\bS^+\|^2_F} =\frac{\|\bS^-\|^2_F}{\|\bS^+\|^2_F}-1,
\end{equation}
which measures the difference between $\bS^-$ and $\bS^+$ normalized by the Frobenius norm of the Moore-Penrose inverse. The application of \eqref{NFL} with $\bS^+$ and $\bS^-$ as in \eqref{S+} and \eqref{S-} leads to the so called empirical NFL, while the usage of the results of Theorem 2 corresponds to the asymptotic NFL. The latter can be interpreted how much both Frobenius norms differ asymptotically.

In Figure \ref{fig1} we present the results of a simulation study where the normalized Frobenius losses are computed for several values of the concentration ratio $c>1$ as a function of dimension $p$. For the sake of illustration, we provide the results obtained for the samples generated from the multivariate normal distribution, while similar values were also obtained for other multivariate distributions. We set the mean vector to zero and, without loss of generality, use the diagonal covariance matrix $\bSigma$ with 20\% of eigenvalues equal to one, 40\% equal to three and the rest equal to ten. The results for $c=1.07$ confirm our expectations discussed after the proof of Theorem 1, namely there is no large difference between both norms: NFL is small and the empirical NFL converges quite fast to its asymptotic counterpart. By increasing $c$ we observe that the NFL increases indicating that both norms deviate from each other. For example, in case $c=2$ the asymptotic and empirical NFLs are close to 1.2 what means that the Frobenius norm for reflexive inverse is more than a double of the Frobenius norm of the Moore-Penrose inverse. This observation becomes more severe for larger values of $c$. Moreover, for $c=10$ we observe a bit slower convergence of the sample NFL to its asymptotic value.

\begin{figure}
  \centering
  \begin{tabular}{c}
 \includegraphics[scale=0.28]{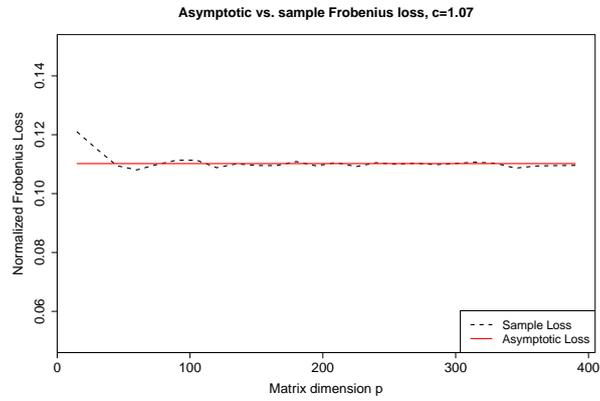}\\  \includegraphics[scale=0.28]{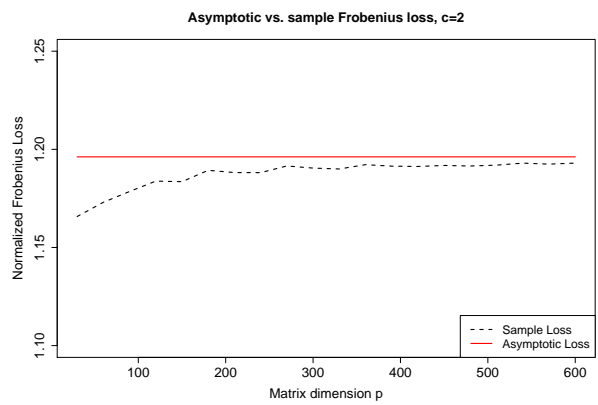}\\
    \includegraphics[scale=0.28]{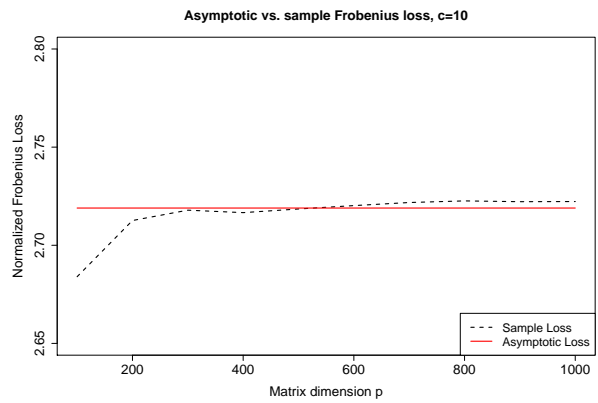}
  \end{tabular}

  \caption{Normalized Frobenius losses $\frac{\|\bS^--\bS^+\|^2_F}{\|\bS^+\|^2_F}$ for several values of $c>1$ as function of $p$.}\label{fig1}
\end{figure}

The obtained findings indicate that the usage of $\bS^+_n$ instead of $\bS_n^-$ must be done with much care and are only reliable if the concentration ratio $p/n$ is close to one. Otherwise, one can not expect neither a good approximation of functionals depending on the inverse covariance matrix nor consistent estimators for them.

\section{Summary}

In many statistical applications the dimension of the data-generating process is larger than the sample size. However, one still needs to compute the inverse of the sample covariance matrix, which is present in many expressions. There are a plenty of ways how to define a generalized inverse with the Moore-Penrose inverse and reflexive generalized inverse matrices be the mostly used ones. While the former is easily computable and is unique, the latter can not be calculated from the data. On the other side, the behavior of the Moore-Penrose inverse in many high-dimensional applications is far away from a satisfactory one, whereas the reflexive inverse obeys very convenient statistical and asymptotic properties. Namely, the application of the reflexive inverse allows to estimate functionals of the precision matrix (the inverse of population covariance matrix) consistently in high-dimension. 

In this work we study spectral properties of both inverses in detail. The almost sure limits of the Stieltjes transforms of their empirical distribution functions of eigenvalues (so-called limiting spectral distributions) is provided in the case the dimension $p$ and the sample size $n$ increase to infinity simultaneously such that their ratio $p/n$ tends to a positive constant $c$ greater than one. We discover that both limiting spectral distributions differ considerably and only coincide when $c$ tends to one. The results are illustrated via the calculation of asymptotic normalized Frobenius loss of both inverse matrices. Finally, we justify the obtained theoretical findings via a simulation study.

\end{document}